\newtheorem{teo}{Theorem}
\newtheorem{lem}[teo]{Lemma}
\newtheorem{prop}[teo]{Proposition}
\newtheorem{cor}[teo]{Corollary}
\theoremstyle{definition}
\newtheorem{dfn}[teo]{Definition}
\def\<{\langle}
\def\>{\rangle}
\def\a{\alpha}
\def\b{\beta}
\def\g{\gamma}
\def\l{{\lambda}}
\def\G{{\Gamma}}
\def\A{{\mathcal A}}
\def\K{{\mathcal K}}
\def\M{{\mathcal M}}
\def\cN{{\mathcal N}}
\def\End{\mathop{\rm End}\nolimits}
\def\1{\mathbf 1}
\def\N{{\mathbb N}}
\begin{document}
\title[Locally adjointable operators]
{Locally adjointable operators on Hilbert $C^*$-modules}
\author{Denis Fufaev}
\author{Evgenij Troitsky}
\thanks{This work is
supported by the Russian Foundation for Basic Research
under grant 23-21-00097.}
\address{Moscow Center for Fundamental and Applied Mathematics,
Dept. Mech. and Math., 
	Lomonosov Moscow State University, 119991 Moscow, Russia}
\email{fufaevdv@rambler.ru}
\email{troitsky@mech.math.msu.su}

\keywords{Hilbert $C^*$-module, dual module, multiplier,
adjointable operator, locally adjointable operator}
\subjclass[2010]{46L08; 47B10; 47L80; 54E15}

\begin{abstract}
In the theory of Hilbert $C^*$-modules over a $C^*$-algebra $\A$ (in contrast with the theory of Hilbert spaces) not each bounded operator ($\A$-homomorphism) admits an adjoint.
The interplay between the sets of adjointable and non-adjointable operators 
plays a very important role in the theory.
We study an intermediate notion of locally adjointable operator $F:\M \to \cN$, i.e.
such an operator that $F\circ \gamma$ is adjointable for any adjointable $\gamma:\A \to \M$.
We have introduced this notion recently and it has demonstrated its usefulness in the context of
theory of uniform structures on Hilbert $C^*$-modules. In the present paper we obtain an explicit description of locally adjointable operators in important cases.
\end{abstract}

\maketitle

\begin{dfn}
A (right) pre-Hilbert $C^*$-module over a $C^*$-algebra $\A$
is an $\A$-module equipped with
a sesquilinear form on the underlying linear space $\<.,.\>:\M\times\M\to \A$ such that
\begin{enumerate}
\item $\<x,x\> \ge 0$ for any $x\in\M$;
\item $\<x,x\> = 0$ if and only if $x=0$;
\item $\<y,x\>=\<x,y\>^*$ for any $x,y\in\M$;
\item $\<x,y\cdot a\>=\<x,y\>a$ for any $x,y\in\M$, $a\in\A$.
\end{enumerate}
A complete pre-Hilbert $C^*$-module w.r.t. its norm $\|x\|=\|\<x,x\>\|^{1/2}$ is called a \emph{Hilbert $C^*$-module}.

If a Hilbert $C^*$-module $\M$ has a countable subset which linear span is dense in $\M$, then it is called \emph{countably generated}.

By $\oplus$ we will denote the orthogonal direct sum of  Hilbert
$C^*$-modules.
\end{dfn}

We refer to \cite{Lance,MTBook,ManuilovTroit2000JMS} for the theory
of Hilbert $C^*$-modules.

\begin{dfn}\label{dfn:standard_hm}
The \emph{standard} Hilbert $C^*$-module $\ell^2(\A)$
is a sum of countably many copies of $\A$
with the inner product $\<a,b\>=
\sum_i a_i^*b_i$, where $a=(a_1,a_2,\dots)$ and $b=(b_1,b_2,\dots)$.
Denote by $\pi_k$, $k\in\N$, the projection $\pi_k: \ell^2(\A) \to \A$, $a \mapsto a_k$.

If $\A$ is unital, then $\ell^2(\A)$ is countably
generated.
\end{dfn}

This example of Hilbert $C^*$-modules is especially important due to
the Kasparov stabilization theorem:
for any countably generated Hilbert $C^*$-module $\M$
over any algebra $\A$,
there exists an isomorphism of Hilbert $C^*$-modules
(preserving the inner product)
$\M\oplus \ell^2(\A)\cong \ell^2(\A)$ \cite{Kasp} (see
\cite[Theorem 1.4.2]{MTBook}). 

\begin{dfn}
A bounded $A$-homomorphism $F:\M\to\cN$ of Hilbert $C^*$-modules is called 
\emph{operator}.
\end{dfn}

\begin{dfn}
For an operator $F:\M\to\cN$ on Hilbert $C^*$-modules over $\A$, we say that $F$ is
\emph{adjointable} with (evidently unique) \emph{adjoint operator} $F^*: \cN \to\M$ if
$\<Fx,y\>_{\cN}=\<x,F^*y\>_{\M}$ for any $x\in \M$ and $y\in \cN$.
\end{dfn}

The following notion was introduced in a particular case of functionals in \cite{FufTro2024UniformStr} and turned out very useful in the description of
$\A$-compact
operators in terms of uniform structures there (see also \cite{Troitsky2020JMAA} and \cite{TroitFuf2020} for the previous research).
\begin{dfn}
A bounded $\A$-morphism  $F:\M\to \cN$ of Hilbert $C^*$-modules is called
\emph{locally adjointable} if, for any adjointable morphism $\gamma: \A \to \cN$,
the composition $F\circ \g: \A \to \cN$ is adjointable.
\end{dfn}

All these definitions are applicable in the case $\cN=\A$. In this case
bounded $A$-operators are called ($\A$)-functionals, adjointable operators are called
adjointable functionals and locally adjointable operators are called locally adjointable functionals.
These sets are denoted by $\M'$, $\M^*$ and $\M'_{LA}$, respectively. 
Evidently
$$
\M^* \subseteq \M'_{LA} \subseteq  \M'.
$$
They are right Banach modules (for the last set see Theorem \ref{teo:Banach} below)
with respect to the action $(fa)(x) =a^* f(x)$, where $f\in \M'$, $x\in \M$, $a\in \A$.
Typically $\M'$ is not a Hilbert $C^*$-module (see \cite{Manuilov2023MathNachr,ManTroit2023}
for a recent progress in the field).

The following notion was introduced and studied in \cite{BakGul2004} and applied to the frame theory in \cite{AramBa} (with developments in \cite{Fuf2021faa} and \cite{Fuf2022Path}). In \cite{Bak2019} explicit results for $\ell_2(\A)$ were obtained. 
Denote by $LM(\A)$, $R(\A)$, and $M(\A)$ \emph{left}, \emph{right}, and (two-sided) \emph{multipliers} of algebra $\A$, respectively (the usual reference is \cite{Ped}, 
see also \cite{MTBook}). 
For any Hilbert $\A$-module $\cN$  a Hilbert $M(\A)$-module $M(\cN)$ (which is called the \emph{multiplier module} of $\cN$) containing $\cN$ as an ideal submodule associated with $\A$, i.e. $\cN=M(\cN)\A $ was defined in  \cite{BakGul2004}. Namely, $M(\cN)$ is the space of all
adjointable maps from $\A$ to $\cN$ being a Hilbert $C^*$-module over $M(\A)$ with the inner
product $\<r_1,r_2\>=r_1^*r_2$. This is really a multiplier because  
$\<r_1,r_2\>a=r_1^*r_2(a)\in \A$. This is an essential extension of $\cN$ in sense of 
 \cite{BakGul2004}. 
 
Any (modular) \emph{multiplier} $m\in M(\cN)$ represents an $\A$-functional $\widehat{m}$ on $\cN$ by the formula $\widehat{m}(x)=\<m,x\>$. This functional is adjointable and its adjoint is given by the formula $\widehat{m}^*(a)=ma$. In fact this map gives rise to an identification of $M(\cN)$ and the module $\cN^*$ of
adjointable functionals on $\cN$ (see, \cite{BakGul2004,Bak2019}), in particular,
\begin{equation}\label{eq:isom1}
(\ell^2(\A))^*\cong M(\ell^2(\A)).
\end{equation}

In \cite[Theorem 2.3]{Bak2019} the following isomorphism was obtained (we write it keeping in mind the difference between left and right modules):
\begin{equation}\label{eq:isom2}
(\ell^2(\A))'\cong \ell^2_{strong}(RM(\A)),
\end{equation}
where the last module is formed by all sequences $\G_i \in RM(\A)$ such that the series
$\sum_i \G_i^* \G_i $  is strongly convergent in $B(H)$ (assuming that $\A$ is faithfully and non-degenerately represented on Hilbert space $H$).

Below in Lemma \ref{lem:descrLA} we will prove some ``intermediate variant'' of these isomorphisms \eqref{eq:isom1} and \eqref{eq:isom2}: 
\begin{equation}\label{eq:isom3}
(\ell^2(\A))'_{LA}\cong (M(\ell^2(\A)))'. 
\end{equation}

Now we pass to results of the present paper.

\begin{lem}
A bounded $\A$-morphism  $F:\K \to \cN$ of Hilbert $C^*$-modules is 
adjointable if and only if, for any $y \in \cN$, the morphism $F_y:\K \to \A$,
$F_y(x)=\<y, F(x)\>$ is adjointable.
\end{lem}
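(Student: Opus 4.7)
The forward direction will be a routine verification. If $F$ admits adjoint $F^*\colon\cN\to\K$, then for $y\in\cN$, $x\in\K$, $a\in\A$,
$$
\<F_y(x),a\>_\A = \<y,F(x)\>^* a = \<x,F^*(y)\>_\K\, a = \<x, F^*(y)\cdot a\>_\K,
$$
so $a\mapsto F^*(y)\cdot a$ serves as adjoint of $F_y$, exhibiting $F_y$ as adjointable.

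For the converse I would assume each $F_y$ has adjoint $T_y\colon\A\to\K$. Under the identification $\K^*\cong M(\K)$ recalled above, each $T_y$ determines an element of the multiplier module $M(\K)$, and $\K$ embeds isometrically into $M(\K)$ via $z\mapsto(a\mapsto z\cdot a)$. The plan is to define $F^*(y)$ as the element of $\K$ representing $T_y$; the main obstacle is that a priori $T_y\in M(\K)$ may fail to come from $\K$ in the non-unital case. To overcome this I would fix an approximate unit $(e_\l)$ for $\A$: right $\A$-linearity of $T_y$ gives $T_{y e_\l}(a)=T_y(e_\l a)=T_y(e_\l)\cdot a$, so the multiplier $T_{y e_\l}$ is represented by $T_y(e_\l)\in\K$. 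Since the assignment $y\mapsto T_y$ is bounded, with $\|T_y\|=\|F_y\|\le\|F\|\cdot\|y\|$, and $y e_\l\to y$, the multipliers $T_{y e_\l}$ converge to $T_y$ in $M(\K)$. By closedness of $\K$ in $M(\K)$ the Cauchy net $(T_y(e_\l))$ converges in $\K$ to an element representing $T_y$, which I take to be $F^*(y)$.

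The remaining verifications will be routine. Right $\A$-linearity of $F^*$ follows from $T_{yc}(a)=T_y(ca)=T_y(c)\cdot a$, yielding $F^*(yc)=F^*(y)\cdot c$; boundedness is $\|F^*\|\le\|F\|$; and the adjointness identity $\<F(x),y\>_\cN=\<x,F^*(y)\>_\K$ is obtained from
$$\<F(x),y\>_\cN\, a = \<F_y(x),a\>_\A = \<x,T_y(a)\>_\K = \<x,F^*(y)\>_\K\, a$$
by letting $a$ run through the approximate unit.
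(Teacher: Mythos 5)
Your proof is correct and is essentially the paper's argument in multiplier-module packaging: the paper likewise fixes an approximate unit $\{u_\l\}$ and shows directly that the net $(F_y)^*(u_\l)$ is Cauchy in $\K$, using the same estimate $\|(F_y)^*(c)\|\le \|F\|\cdot\|yc\|$ that underlies your bound $\|T_{ye_\l}-T_y\|\le\|F\|\cdot\|ye_\l-y\|$, so your $F^*(y)$ and the paper's $G(y)$ are the same limit. One cosmetic remark: the first equality in $T_{ye_\l}(a)=T_y(e_\l a)=T_y(e_\l)\cdot a$ is not right $\A$-linearity of $T_y$ but the (equally routine) identity $T_{yc}(a)=T_y(ca)$, which follows from $F_{yc}(x)=c^*F_y(x)$ and uniqueness of adjoints.
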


\begin{proof}
Suppose that $F$ is adjointable. Then, for $x\in\K$, $a\in \A$
$$
\<F_y(x),a\>=\<y, F(x)\>^* a =\<F^*(y),x\>^* a = \<x, F^*(y)a \>, \qquad (F_y)^*(a)=F^*(y)a,
$$
and $F_y(x)$ is adjointable.

Conversely, suppose that each $F_y$ is adjointable. Then, for an approximate unit $\{u_\l\}$
in $\A$, one has
$$
\<F(x),y\>u_\l =\<y,F(x)\>^* u_\l = F_y(x)^* u_\l =\<F_y(x), u_\l \>_\A =\<x,  (F_y)^* u_\l\>_\K.
$$
Since
$$
\|(F_y)^* (u_\l-u_\mu)\|=\sup_{z\in\K,\: \|z\|\le 1} |\<z,  (F_y)^* (u_\l-u_\mu)\>|
=\sup_{z\in\K,\: \|z\|\le 1} |\<F_y(z),  u_\l-u_\mu\>_\A|
$$
$$
=\sup_{z\in\K,\: \|z\|\le 1} |\<y,F(z)\>^*  (u_\l-u_\mu)|
=\sup_{z\in\K,\: \|z\|\le 1} |\<F(z), y (u_\l-u_\mu)\>_\cN|\le \|F\| \cdot \| y (u_\l-u_\mu)\|,
$$
we obtain (see \cite[Lemma 1.3.8]{MTBook})
that  the net 
$(F_y)^* u_\l$ is a Cauchy net. So we can define an operator $G$ by $G(y)=\lim\limits_\l(F_y)^* u_\l$. The operator $G$ is evidently bounded by its defining formula.
Since the above limit is in norm topology, for any $x\in\K$, $y\in\cN$, we have 
$$
\<F(x),y\>=\lim\limits_\l\<F(x),y\>u_\l=\lim\limits_\l(\<y,F(x)\>)^*u_\l=\lim\limits_\l\<F_y(x),u_\l\>=
\lim\limits_\l\<x,(F_y)^*u_\l\>=
$$
$$
=\left\<x,\lim\limits_\l(F_y)^*u_\l\right\>=\<x,G(y)\>,
$$
so, $F$ is adjointable.
\end{proof}

\begin{cor}
A bounded $\A$-morphism  $F:\M\to \cN$ of Hilbert $C^*$-modules is 
locally adjointable if and only if, for any adjointable morphism $\gamma: \A \to \cN$ and any
$y \in \cN$, the morphism $F_{\g,y}: \A \to \A$, $F_{\g,y}(a)=\<y, F\circ \g (a)\>$ is
adjointable.
\end{cor}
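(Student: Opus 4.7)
The plan is to reduce the corollary directly to the preceding lemma by specialization. By the definition of locally adjointable, $F:\M\to\cN$ is locally adjointable exactly when the composite $F\circ\gamma:\A\to\cN$ is adjointable for every adjointable $\gamma:\A\to\cN$. So the statement to prove is really a statement about the adjointability of each such composite.

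Next, I would apply the lemma to the operator $F\circ\gamma:\A\to\cN$ with the role of $\K$ played by $\A$. The lemma then says that $F\circ\gamma$ is adjointable if and only if, for every $y\in\cN$, the functional $(F\circ\gamma)_y:\A\to\A$ defined by $(F\circ\gamma)_y(a)=\langle y, F\circ\gamma(a)\rangle$ is adjointable.

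Finally, I would observe that by the notation introduced in the corollary, $(F\circ\gamma)_y = F_{\gamma,y}$, so the biconditional above is exactly the desired characterization. Quantifying over all adjointable $\gamma:\A\to\cN$ then yields the corollary. There is no real obstacle here; the corollary is a formal consequence obtained by combining the definition of local adjointability with the lemma applied to $F\circ\gamma$.
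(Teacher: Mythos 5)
Your proposal is correct and is precisely the intended derivation: the paper states this as an immediate corollary of the preceding lemma, obtained by applying that lemma to the composite $F\circ\gamma\colon\A\to\cN$ (with $\K=\A$) and quantifying over all adjointable $\gamma$, exactly as you do.
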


\begin{teo}\label{teo:Banach}
Locally adjointable operators from  $\M$ to $\cN$ form a Banach subspace of the Banach 
space of all bounded $\A$-morphisms from  $\M$ to $\cN$.

In particular, locally adjointable endomorphisms of $\M$ form a Banach subalgerbra of the algebra 
$\End_\A(\M)$ of all bounded $\A$-endomorphisms.
\end{teo}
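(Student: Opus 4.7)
The plan is to verify three things: that the locally adjointable operators from $\M$ to $\cN$ form a linear subspace of the bounded $\A$-morphisms, that this subspace is norm-closed, and that, in the endomorphism case, it is also closed under composition. Linearity is immediate from the definition combined with the linearity of the class of adjointable operators: for an adjointable $\gamma:\A\to\M$ and $\lambda,\mu\in\C$, one has $(\lambda F+\mu G)\circ\gamma=\lambda(F\circ\gamma)+\mu(G\circ\gamma)$, which is a linear combination of adjointable morphisms $\A\to\cN$ and hence adjointable (with adjoint $\bar\lambda(F\circ\gamma)^*+\bar\mu(G\circ\gamma)^*$).

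The substantive step is norm-closedness, and here I would invoke the standard sublemma that the set of adjointable operators between any two Hilbert $C^*$-modules is norm-closed in the space of all bounded $\A$-morphisms. Its argument goes as follows: if $T_n$ are adjointable with $T_n\to T$ in operator norm, then $\|T_n^*-T_m^*\|=\|T_n-T_m\|$, so $\{T_n^*\}$ is Cauchy and converges to some $S$; passing to the limit in $\<T_nx,y\>=\<x,T_n^*y\>$ gives $\<Tx,y\>=\<x,Sy\>$, whence $T$ is adjointable with $T^*=S$. Granting this, if $F_n\to F$ in norm with each $F_n$ locally adjointable, then for any adjointable $\gamma:\A\to\M$ the compositions $F_n\circ\gamma$ are adjointable and converge in norm to $F\circ\gamma$, so the sublemma produces adjointability of $F\circ\gamma$; hence $F$ is locally adjointable.

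For the subalgebra claim, suppose $F,G:\M\to\M$ are locally adjointable and $\gamma:\A\to\M$ is adjointable. Then $G\circ\gamma:\A\to\M$ is itself an adjointable morphism (by local adjointability of $G$), so $(F\circ G)\circ\gamma=F\circ(G\circ\gamma)$ is adjointable, applying local adjointability of $F$ to the adjointable morphism $G\circ\gamma$. Combined with linearity and norm-closedness, this makes the locally adjointable endomorphisms a Banach subalgebra of $\End_\A(\M)$. The only non-routine ingredient is the norm-closedness sublemma for adjointable operators, which is essentially classical, so I do not expect any serious obstacle; the whole proof is a direct unfolding of the definition.
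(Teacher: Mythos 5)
Your proposal is correct and follows essentially the same route as the paper: the paper's (very terse) proof just notes that $F_n\circ\gamma\to F\circ\gamma$ in norm and concludes adjointability of $F\circ\gamma$, implicitly relying on the norm-closedness of the set of adjointable operators, which you prove explicitly, and it leaves the linearity and composition ($ (F\circ G)\circ\gamma=F\circ(G\circ\gamma)$) checks as evident, which you also spell out. No gaps; your write-up is simply a more detailed version of the same argument.
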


\begin{proof}
Indeed, if $\{F_n\}$ is a sequence of locally adjointable morphisms and $F_n\to F$ in norm, then for any adjointable morphism $\g$ we have that 
$\|F_n\circ\g-F\circ\g\|\le\|F_n-F\|\cdot\|\g\|$, so $F_n\circ\g\to F\circ\g$ in norm too and $F\circ\g$ is adjointable.
\end{proof}

\begin{prop}\label{prop:conj}
The dual module $(M(\ell_2(\A)))'$ of $M(\ell_2(\A))$ consists of all sequences $\a_i \in M(\A)$ such that
\begin{enumerate}
\item[\rm 1)] the partial sums of $\sum_i \a^*_i \a_i$ are bounded, i.e. this series is strong convergent in $B(H)$;
\item[\rm 2)] the series $\sum_i \a_i^* \b_i$ is left strict convergent for any $\b = \{\b_i\} \in M(\ell_2(\A))$;
\item[\rm 3)] its limit belongs to $M(\A) \subseteq LM(\A)$.
\end{enumerate}
\end{prop}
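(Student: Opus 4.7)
The approach is a Riesz-type representation using the canonical inclusions $\iota_i := \pi_i^* : \A \to \ell_2(\A)$, which, being adjointable, lie in $M(\ell_2(\A))$. Given $f \in (M(\ell_2(\A)))'$ I would set $\alpha_i := f(\iota_i)^* \in M(\A)$, and conversely, given a sequence $(\alpha_i)$ satisfying 1)--3), I would define $f(\beta)$ to be the left strict limit of $\sum_i \alpha_i^* \beta_i$, which lies in $M(\A)$ by 3). With this choice $f(\iota_i) = \alpha_i^*$, so the two correspondences are formally mutually inverse.

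For the forward direction I would test $f$ on the finite sum $\eta^{(N)} := \sum_{i \le N} \iota_i \alpha_i \in M(\ell_2(\A))$, whose squared norm is $\|\sum_{i \le N} \alpha_i^* \alpha_i\|$. By $M(\A)$-linearity $f(\eta^{(N)}) = \sum_{i \le N} \alpha_i^* \alpha_i$, and boundedness of $f$ then forces $\|\sum_{i \le N} \alpha_i^* \alpha_i\| \le \|f\|^2$, giving condition 1) (the increasing bounded net of positive operators then converges strongly in $B(H)$). For 2) and 3), I would use that for each $\beta \in M(\ell_2(\A))$ and $a \in \A$ the element $\beta a$ lies in $\ell_2(\A)$ and equals the norm-limit of its truncations $\beta^{(N)} a$, where $\beta^{(N)} := \sum_{i \le N} \iota_i \beta_i$; norm continuity of $f$ together with $M(\A)$-linearity then yields
$$
f(\beta)\, a \;=\; f(\beta a) \;=\; \lim_N f(\beta^{(N)})\, a \;=\; \lim_N \sum_{i \le N} \alpha_i^* \beta_i\, a,
$$
which is the desired left strict convergence of $\sum_i \alpha_i^* \beta_i$ to $f(\beta) \in M(\A)$.

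For the converse direction I would invoke the Cauchy--Schwarz inequality in $\ell_2(\A)$ to obtain, for $a \in \A$ and any $0 \le M \le N$,
$$
\Bigl\|\sum_{i=M+1}^N \alpha_i^* \beta_i\, a\Bigr\|^2 \;\le\; \Bigl\|\sum_{i=M+1}^N \alpha_i^* \alpha_i\Bigr\| \cdot \Bigl\|\sum_{i=M+1}^N (\beta_i a)^*(\beta_i a)\Bigr\|,
$$
whose right-hand side tends to $0$ as $M,N \to \infty$ by condition 1) and the fact that $\beta a \in \ell_2(\A)$. Thus $\sum_i \alpha_i^* \beta_i\, a$ is norm-Cauchy in $\A$, meaning condition 2) is in fact a consequence of 1); the genuine extra information is 3), which lifts the resulting element of $LM(\A)$ into $M(\A)$. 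Setting $f(\beta) \in M(\A)$ to be this left strict limit, the same estimate with $M = 0$ and the supremum over $\|a\| \le 1$ gives $\|f(\beta)\| \le C \|\beta\|$ with $C := \bigl(\sup_N \|\sum_{i \le N} \alpha_i^* \alpha_i\|\bigr)^{1/2}$, and $M(\A)$-linearity is immediate from the defining formula.

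The step I expect to be the main obstacle is ensuring that the left strict limit actually sits in $M(\A)$ rather than merely in $LM(\A)$: in the converse direction this is exactly what condition 3) encodes, while in the forward direction it is delivered by $f$ taking values in $M(\A)$ together with the identity $f(\beta)\, a = f(\beta a)$ for $a \in \A$. A related subtlety is that the truncations $\beta^{(N)}$ need not converge to $\beta$ in norm in $M(\ell_2(\A))$, so all passages to the limit must be carried out in the (left) strict topology rather than the norm topology.
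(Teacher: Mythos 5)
Your proof is correct, and it takes a genuinely different route from the paper's. The paper gets the forward direction by restricting a functional $\a\in(M(\ell_2(\A)))'$ to the submodules $\ell_2(M(\A))$ and $\ell_2(\A)=M(\ell_2(\A))\A$ and quoting Baki\'c's description of the dual of a standard module (\cite[Theorem 2.3]{Bak2019}) for each restriction, then upgrading $\a(\b)a=\sum_i\a_i^*\b_i a$ to $\a(\b)=\sum_i\a_i^*\b_i$ via the fact that left multipliers agreeing on $\A$ coincide. You instead rederive what is needed directly: you extract the coefficients from the canonical inclusions $\iota_i=\pi_i^*$, obtain 1) by testing on the finite combinations $\sum_{i\le N}\iota_i\a_i$ (a clean norm computation giving $\|\sum_{i\le N}\a_i^*\a_i\|\le\|f\|^2$), and obtain 2)--3) by truncating $\b a$ inside $\ell_2(\A)$ and using norm continuity plus $M(\A)$-linearity -- in effect reproving the relevant special case of Baki\'c's theorem, so your argument is self-contained where the paper's is citation-based. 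In the converse direction you use exactly the same Cauchy--Schwarz estimate as the paper, but you add the worthwhile observation that the \emph{convergence} asserted in 2) is automatic from 1) (the partial sums are left strictly Cauchy and uniformly bounded, hence converge in $LM(\A)$), so the genuinely new hypothesis is only 3), membership of the limit in $M(\A)$; you also spell out the boundedness and $M(\A)$-linearity of the resulting functional, which the paper dismisses as evident. Your closing caveat that truncations converge only strictly, not in norm, in $M(\ell_2(\A))$ is exactly the right point of care, and you correctly route all limits through $\ell_2(\A)$ where norm convergence does hold.
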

\begin{proof}
Suppose, $\a\in (M(\ell_2(\A)))'$, $\a: M(\ell_2(\A)) \to M(\A)$. 
Then its restriction on the submodule $\ell_2(M(\A))$ defines (by \cite[Theorem 2.3]{Bak2019}) a sequence $\a_i \in M(\A)$
which must satisfy the property 1). It also can be restricted to $\ell_2(\A)=M(\ell_2(\A))\A$, and also by \cite[Theorem 2.3]{Bak2019}
the action is given by 
\begin{equation}\label{eq:act_on_subm}
\sum_{i=1}^\infty \a^*_i \b_i a, \qquad \{\b_i\} \in M(\ell_2(\A)),\quad a\in \A, \mbox{ the series is norm-convergent}.
\end{equation}
This gives 2). 

Two left multipliers $u$ and $v$ coincide, if $ua=va$ for any $a\in \A$. Thus, the equality
$$
\a(\b)a = \a(\b a)=\sum_{i=1}^\infty \a^*_i \b_i a = \left(\sum_{i=1}^\infty \a^*_i \b_i \right)a
$$ 
implies 
\begin{equation}\label{eq:defining_functional}
\a(\b) = \sum_{i=1}^\infty \a^*_i \b_i
\end{equation}
and hence 3).

Also, (\ref{eq:defining_functional}) implies that the linear mapping $\a \mapsto \{\a_i\}$ is injective.

Conversely, if $\{\a_i\}$ satisfies 1)-3), then (\ref{eq:defining_functional}) defines an element 
of $(M(\ell_2(\A)))'$. Indeed, everything is evident, one needs only to verify that this $\a$ is bounded.
For any $m<n$ and $a\in \A$ we have
$$
\left(\sum_{i=m}^n \a^*_i \b_i \right)^*\sum_{i=m}^n \a^*_i \b_i \le
\left\| \sum_{i=m}^n \a^*_i \a_i \right\| \cdot \left\| \sum_{i=m}^n \b^*_i \b_i \right\|.
$$  
Hence, $\a$ is bounded, and the mapping is surjective.
\end{proof}

\begin{lem}\label{lem:descrLA}
An $\A$-functional $\G:\ell_2(\A) \to \A$ is locally adjointable if and only if its 
collection of coefficients $\G_i$ determines an element of $(M(\ell_2(\A)))'$.
\end{lem}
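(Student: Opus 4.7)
The plan is to match local adjointability of $\G$ directly against conditions~1)--3) of Proposition~\ref{prop:conj}, using two standard identifications implicit in the introduction: adjointable morphisms $\gamma:\A\to\ell_2(\A)$ are exactly the elements of the multiplier module $M(\ell_2(\A))$, and adjointable functionals $\A\to\A$ are exactly left multiplications by elements of $M(\A)\subseteq LM(\A)$. Writing $\G$ via \eqref{eq:isom2} in coordinates $\G(\{a_i\})=\sum_i \G_i^* a_i$ with $\G_i\in RM(\A)$, condition~1) is automatic from the strong convergence of $\sum \G_i^*\G_i$, so the whole question reduces to upgrading each $\G_i$ from $RM(\A)$ to $M(\A)$ and verifying conditions~2)--3).

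For the forward implication, I would first feed local adjointability with the elementary adjointable morphisms $\gamma_k:\A\to\ell_2(\A)$, $a\mapsto(\d_{ik}a)_i$. A direct computation gives $\G\circ\gamma_k(a)=\G_k^* a$, so adjointability of this functional $\A\to\A$ upgrades each $\G_k$ from $RM(\A)$ to $M(\A)$. For a general adjointable $\gamma$, identified with $\{\b_i\}\in M(\ell_2(\A))$, the composition computes as
$$
\G\circ\gamma(a)\;=\;\sum_i \G_i^*\b_i\, a,\qquad a\in\A.
$$
Well-definedness of the right-hand side as a bounded $\A$-linear map of $a$ is exactly the left strict convergence of $\sum_i \G_i^*\b_i$ in $LM(\A)$ required by condition~2), and adjointability of $\G\circ\gamma$ pins the limit to $M(\A)$, which is condition~3).

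The converse is then a straightforward read-back: for $\{\G_i\}\in(M(\ell_2(\A)))'$ and any adjointable $\gamma\leftrightarrow\{\b_i\}\in M(\ell_2(\A))$, conditions~2)--3) rewrite $\G\circ\gamma(a)=c\cdot a$ for a single $c\in M(\A)$, which is an adjointable functional on $\A$, so $\G$ is locally adjointable. I expect the main obstacle to be purely bookkeeping: checking that the identification of $M(\ell_2(\A))$ with adjointable maps $\A\to\ell_2(\A)$, the expansion $\gamma(a)=(\b_i a)_i$, and the inner product $\<m,x\>=m^*x$ of \cite{BakGul2004} cooperate consistently, so that the composite formulas above are legitimate and the two occurrences of ``coefficients'' (one via \eqref{eq:isom2}, one via Proposition~\ref{prop:conj}) really produce the same sequence.
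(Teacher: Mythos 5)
Your proposal is correct and follows essentially the same route as the paper's proof: it uses Bakić's coordinate description of $(\ell_2(\A))'$, the identification of adjointable morphisms $\gamma:\A\to\ell_2(\A)$ with sequences in $M(\ell_2(\A))$ acting by $\gamma(a)=(\gamma_i a)_i$, the test morphisms $\gamma_k$ (the paper's $\gamma=(0,\dots,0,1_{M(\A)},0,\dots)$) to upgrade each $\G_i$ to $M(\A)$, and the coordinate formula $\G\circ\gamma(a)=\sum_i\G_i^*\gamma_i a$ to match conditions 2)--3) of Proposition~\ref{prop:conj} in both directions. The only difference is the order of steps (you test the elementary morphisms first, the paper last), which is immaterial.
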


\begin{proof}
An $\A$-functional is defined by a sequence $\G_i \in RM(\A)$ such that 
\begin{equation}\label{eq:conv_sum}
\sum_i \G_i^* \G_i \mbox{ strongly converges in }B(H)
\end{equation} 
(see \cite{Bak2019} and \eqref{eq:isom2} above). The action on $\a=(\a_1,\a_2,\dots)\in \ell_2(\A)$ 
is defined by $\G(\a)=\sum_i \G^*_i \a_i$ and the series is norm-convergent. 
Suppose that $\G$ is locally adjointable.
Consider an arbitrary adjointable morphism $\g: \A\to \ell_2(\A)$.
The set of these morphisms is isomorphic, on the one hand,  to the space $(\ell_2(\A))^*$ of adjointable $\A$-functionals, and on the other hand, to the module $M(\ell_2(\A))$ (see \cite{BakGul2004,Bak2019}).
Namely there exist (by \cite[Theorems 1.8 and 2.1]{BakGul2004})  $\g_i \in M(\A)$ such that $\sum_i \g_i^* \g_i$ is strictly convergent and 
\begin{equation}\label{eq:action}
\g(a)=(\g_1 a, \g_2 a, \dots), \qquad a \in \A.
\end{equation}
Then 
$$
\G\circ \g (a)= \sum_i \G_i^* \g_i a,
$$
where the series $\sum_i \G_i^* \g_i=\mu$ is convergent in left strict topology and defines
an element $\mu \in LM(\A)$. This gives property 2) of Proposition \ref{prop:conj}.
This morphism $\A\to \A$ has to be adjointable and hence we have 
$\sum_i \G_i^* \g_i \in M(\A)$. This gives 3)  of Proposition \ref{prop:conj}.
In particular, for $\g=(0,\dots,0,1_{M(\A)},0,\dots)$, we have that $\G^*_i$ is an adjointable left multiplier, i.e.
$\G_i \in M(\A)$. Together with (\ref{eq:conv_sum}) this gives 1)  of Proposition \ref{prop:conj}.

The converse is similar. 
Indeed, from 1) it follows that the sequence $\{\G_i\}$ defines an element of $(\ell^2(\A))'$ which acts by formula $\G(x)=\sum\limits_{i=1}^\infty\G_i^*x_i$, where series is norm-convergent. In particular, for any adjointable $\g:\A\to\ell^2(\A)$ and any $a\in \A$ we have $\G(\g(a))=\sum\limits_{i=1}^\infty\G_i^*\g_ia$.
From 2) it follows that $\left(\sum\limits_{i=1}^\infty\G_i^*\g_i\right)a=\sum\limits_{i=1}^\infty\G_i^*\g_ia=\G(\g(a))$, and from 3) it follows that $\left(\sum\limits_{i=1}^\infty\G_i^*\g_i\right)\in M(\ell^2(\A))$, i.e. $\sum\limits_{i=1}^\infty\G_i^*\g_i=\G\circ\g$ is adjointable. 
\end{proof}

The following statement will be used below and also seems to be of independent interest.

\begin{teo}
A bounded  $\A$-morphism $F:\M\to\ell_2(\A)$ is adjointable if and only if all  
projections $\pi_k\circ F$, $k\in\N$, are adjointable.
\end{teo}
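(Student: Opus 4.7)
The plan is as follows. The forward implication is immediate once one observes that each $\pi_k$ is itself adjointable: its adjoint is the inclusion $\iota_k:\A\to\ell_2(\A)$ placing the argument in the $k$-th coordinate, as confirmed by the direct calculation $\<\pi_k(b),a\>_\A = b_k^* a = \<b,\iota_k(a)\>_{\ell_2(\A)}$. So if $F$ is adjointable, each $\pi_k\circ F$ is a composition of adjointable maps, hence adjointable, with adjoint $F^*\circ\iota_k$.

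For the converse, assume each $\pi_k\circ F$ admits an adjoint $(\pi_k F)^*:\A\to\M$. The natural candidate for $F^*:\ell_2(\A)\to\M$ is
$$
F^*(a) \;:=\; \sum_{k=1}^{\infty} (\pi_k\circ F)^*(a_k), \qquad a=(a_1,a_2,\ldots)\in\ell_2(\A).
$$
The main technical step---really the only nontrivial part of the argument---is to establish norm-convergence of this series in $\M$. For the partial sums $S_n:=\sum_{k=1}^n (\pi_k F)^*(a_k)$, testing against an arbitrary $y\in\M$ and using the defining property of each $(\pi_k F)^*$ yields
$$
\<y,\, S_n-S_m\> \;=\; \sum_{k=m+1}^{n} \<\pi_k F(y),\, a_k\> \;=\; \<F(y),\, a^{(m,n)}\>,
$$
where $a^{(m,n)}$ denotes the truncation $(0,\ldots,0,a_{m+1},\ldots,a_n,0,\ldots)$. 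Combining the Hilbert-module identity $\|z\|=\sup_{\|y\|\le 1}\|\<y,z\>\|$ with the Cauchy--Schwarz inequality gives $\|S_n-S_m\|\le \|F\|\cdot\|a^{(m,n)}\|$, which tends to $0$ because $a\in\ell_2(\A)$ forces $\|\sum_{k=m+1}^n a_k^* a_k\|\to 0$. Hence $\{S_n\}$ is Cauchy and $F^*(a)$ is well defined.

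The remaining verifications are routine. Passing to the limit in $\|S_n\|\le\|F\|\cdot\|a^{(n)}\|\le\|F\|\cdot\|a\|$ gives boundedness with $\|F^*\|\le\|F\|$; $\A$-linearity is inherited termwise from each $(\pi_k F)^*$; and for any $x\in\M$, continuity of the inner product combined with the identity $\<x,S_n\>=\<F(x),a^{(n)}\>$ yields in the limit $\<x,F^*(a)\>=\<F(x),a\>$, confirming the adjoint relation. I expect the only real obstacle to be the Cauchy estimate above, whose force depends crucially on $\ell_2(\A)$ being an $\ell^2$-type module, so that tails of elements have norms tending to zero; everything else is bookkeeping.
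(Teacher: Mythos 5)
Your proposal is correct and follows essentially the same route as the paper: the forward direction via adjointability of $\pi_k$, and the converse by defining the candidate adjoint $\sum_k(\pi_k\circ F)^*(a_k)$ and proving norm-convergence through the estimate $\|S_n-S_m\|\le\|F\|\cdot\|a^{(m,n)}\|$ obtained by testing against unit vectors, which is exactly the paper's Cauchy argument. No gaps.
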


\begin{proof}
If $F$ is adjointable then $\pi_k\circ F$ is adjointable since the projections $\pi_k$ are adjointable.

Suppose that for any projection $\pi_k$ we have that $\pi_k\circ F$ is adjointable.
Then, for any $y=(y_1,y_2,\dots)\in \ell^2(\A)$,
$$
\left\|\sum\limits_{k=p}^q(\pi_k\circ F)^*(\pi_k(y))\right\|=\sup\limits_{z\in \M,\: \|z\|\le 1} \left|\left\<z,\sum\limits_{k=p}^q(\pi_k\circ F)^*(\pi_k(y))\right\>_{\M}\right|=
$$
$$
=
\sup\limits_{z\in \M,\: \|z\|\le 1}\left|\left\<\sum\limits_{k=p}^q(\pi_k\circ F)(z),\pi_k(y)\right\>_{\A}\right|
=\sup\limits_{z\in \M,\: \|z\|\le 1}\left|\left\<F(z),\sum\limits_{k=p}^q\pi^*_k \pi_k(y)\right\>\right|
\le
$$
$$
\le\|F\|\cdot\left\|\sum\limits_{k=p}^q \pi^*_k \pi_k(y)\right\|=
\|F\| \cdot \left\|\sqrt{\sum\limits_{k=p}^q y_k^*y_k}\right\|
$$
Since the series $\sum\limits_{k=1}^\infty y_k^*y_k$ is norm-convergent, this implies that, for every $y\in \ell_2(\A)$, the series $\sum\limits_{k=1}^\infty(\pi_k\circ F)^*(\pi_k(y))$ is also norm-convergent in $\M$ and the equality
 $S(y)=\sum\limits_{k=1}^\infty(\pi_k\circ F)^*(\pi_k(y))$ defines a bounded $\A$-operator, $S:\ell^2(\A)\to \M$. Also, for any $x\in \M$, $y\in\ell^2(\A)$,
$$
\<F(x),y\>_{\ell^2(\A)}=\sum\limits_{k=1}^\infty\<\pi_k\circ F(x),\pi_k(y)\>_{\A}=\sum\limits_{k=1}^\infty\<x,(\pi_k\circ F)^*(\pi_k(y))\>=\<x,S(y)\>,
$$
so $F$ is adjointable with $S$ being the adjoint operator. 
\end{proof}

\begin{cor}
\begin{enumerate}
\item[\rm 1).] A bounded  $\A$-morphism $F:\M\to\ell_2(\A)$ is locally adjointable if and only if all its 
projections $\pi_k\circ F$, $k\in\N$, are locally adjointable.
\item[\rm 2).] An endomorphism $F\in \End_\A(\ell_2(\A))$ is locally adjointable if and only if its 
matrix rows belong to $M(\ell_2(\A))'$.
\end{enumerate}
\end{cor}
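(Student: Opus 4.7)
The plan is to reduce both parts to results already established in the excerpt, so no substantially new analysis is needed; the corollary is essentially a packaging of the preceding theorem together with Lemma~\ref{lem:descrLA}.

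For part 1), I would handle the two implications separately. The forward direction is immediate: if $F$ is locally adjointable and $\gamma:\A\to\M$ is adjointable, then $F\circ\gamma$ is adjointable, and since each projection $\pi_k$ is itself adjointable, the composition $(\pi_k\circ F)\circ\gamma=\pi_k\circ(F\circ\gamma)$ is adjointable as a composition of adjointable operators, so each $\pi_k\circ F$ is locally adjointable. For the reverse direction, assume all $\pi_k\circ F$ are locally adjointable and let $\gamma:\A\to\M$ be adjointable. Then for every $k$ the map $\pi_k\circ(F\circ\gamma)=(\pi_k\circ F)\circ\gamma$ is adjointable, and the preceding theorem applied to $F\circ\gamma:\A\to\ell_2(\A)$ gives that $F\circ\gamma$ itself is adjointable. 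Hence $F$ is locally adjointable.

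For part 2), I would specialize the above to $\M=\ell_2(\A)$. Then each functional $\pi_k\circ F:\ell_2(\A)\to\A$ has a coefficient sequence which, under the identification $(\ell_2(\A))'\cong\ell^2_{\mathrm{strong}}(RM(\A))$ from \eqref{eq:isom2}, is precisely the $k$-th row of the matrix representing $F$. Part 1) says $F$ is locally adjointable iff every $\pi_k\circ F$ is locally adjointable, and Lemma~\ref{lem:descrLA} says a functional on $\ell_2(\A)$ is locally adjointable iff its coefficient sequence determines an element of $(M(\ell_2(\A)))'$. Combining these yields exactly the statement that $F$ is locally adjointable iff each matrix row lies in $(M(\ell_2(\A)))'$.

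There is no real obstacle here; the only point that deserves care is the bookkeeping identification of the coefficient sequence of $\pi_k\circ F$ with the $k$-th matrix row of $F$, which is transparent once one recalls that $F(\a)=(\pi_1F(\a),\pi_2F(\a),\dots)$ and that each $\pi_kF$ acts by the row formula $\pi_kF(\a)=\sum_i (\G_{ki})^*\a_i$ from \eqref{eq:isom2}.
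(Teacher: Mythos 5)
Your proof is correct and follows exactly the route the paper intends: part 1) reduces to the preceding theorem applied to $F\circ\gamma:\A\to\ell_2(\A)$ (the forward direction being the composition of adjointables with the adjointable $\pi_k$), and part 2) combines part 1) with Lemma~\ref{lem:descrLA} via the identification of the coefficient sequence of $\pi_k\circ F$ with the $k$-th matrix row. The paper states this corollary without a written proof, and your argument is precisely the one it relies on.
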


\begin{cor}
$M(\ell_2(\A)) \subset (\ell_2(\A))'_{LA}$.
\end{cor}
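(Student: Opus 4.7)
The cleanest plan is to chain the identifications already established. By Lemma \ref{lem:descrLA}, $(\ell_2(\A))'_{LA}$ corresponds (via coefficient sequences) to $(M(\ell_2(\A)))'$, and by Proposition \ref{prop:conj} the latter is described as those sequences $\{\a_i\}\subset M(\A)$ satisfying the three listed conditions. My task therefore reduces to verifying conditions 1)--3) for the coefficient sequence $\{\g_i\}$ of an arbitrary $m\in M(\ell_2(\A))$.

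For this I would invoke the Hilbert $M(\A)$-module structure on $M(\ell_2(\A))$, whose inner product is $\<m,\b\>=\sum_i \g_i^* \b_i$. Condition 1) is immediate from the strict convergence of $\<m,m\>=\sum_i \g_i^* \g_i$ in $M(\A)$, which forces the partial sums to be bounded. Conditions 2) and 3) follow simultaneously from the fact that, for every $\b\in M(\ell_2(\A))$, the series $\sum_i \g_i^* \b_i$ is strictly convergent in $M(\A)$, which supplies both the (weaker) left strict convergence demanded in 2) and the membership of the limit in $M(\A)$ demanded in 3). A parallel one-line conceptual proof is also available: every $m\in M(\ell_2(\A))$ is by definition an adjointable map $\A\to \ell_2(\A)$, so the associated functional $\widehat{m}(x)=\<m,x\>$ lies in $(\ell_2(\A))^* \subseteq (\ell_2(\A))'_{LA}$.

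I do not expect any genuine obstacle; the statement is essentially a bookkeeping corollary of the two preceding results, and all the analytic work has already been done in Proposition \ref{prop:conj} and Lemma \ref{lem:descrLA}. The only point deserving a separate remark is strictness of the inclusion (the symbol $\subset$ in the statement): when $\A$ is unital every bounded $\A$-morphism $\A\to\M$ is automatically adjointable, so $(\ell_2(\A))'_{LA}=(\ell_2(\A))'$, while $M(\ell_2(\A))=\ell_2(\A)\cong (\ell_2(\A))^*$, and the classical strict inclusion $(\ell_2(\A))^* \subsetneq (\ell_2(\A))'$ (e.g.\ over $\A=B(H)$) witnesses strictness.
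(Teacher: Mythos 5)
Your proposal is correct, and your ``parallel one-line conceptual proof'' is in fact exactly the paper's proof: the corollary is deduced in one line from the identification $M(\ell_2(\A))\cong(\ell_2(\A))^*$ together with the trivial inclusion $\M^*\subseteq\M'_{LA}$ stated at the outset. Your primary route --- verifying conditions 1)--3) of Proposition \ref{prop:conj} for the coefficient sequence $\{\g_i\}$ of $m\in M(\ell_2(\A))$ and then invoking Lemma \ref{lem:descrLA} --- also works: strict convergence of the increasing partial sums of $\sum_i\g_i^*\g_i$ does bound them by $\|\<m,m\>\|$, and the strictly convergent series $\sum_i\g_i^*\b_i=\<m,\b\>\in M(\A)$ delivers 2) and 3) at once. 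But this is a detour: it re-derives, coefficient by coefficient, precisely the information already packaged in the isomorphism $M(\cN)\cong\cN^*$ that the one-liner uses, so it buys nothing beyond making the coefficient description explicit. One caveat on your closing remark: the paper uses $\subset$ and $\subseteq$ interchangeably and its proof makes no claim of properness, so the strictness discussion, while essentially correct for infinite-dimensional unital $\A$ (where $\ell_2(\A)$ fails to be self-dual), is addressing something the corollary does not assert.
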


\begin{proof}
Indeed, $M(\ell_2(\A))=(\ell_2(\A))^*\subset (\ell_2(\A))'_{LA}$.
\end{proof}


\begin{thebibliography}{10}

\bibitem{AramBa}
Lj. Aramba\v{s}i\'{c}, D.~Baki\'{c}, \emph{Frames and outer frames for
  {H}ilbert {$C^\ast$}-modules}, Linear Multilinear Algebra \textbf{65} (2017),
  no.~2, 381--431.  

\bibitem{Bak2019}
D.~Baki\'{c}, \emph{Weak frames in {H}ilbert {$C^*$}-modules with
  application in {G}abor analysis}, Banach J. Math. Anal. \textbf{13} (2019),
  no.~4, 1017--1075.  

\bibitem{BakGul2004}
D.~Baki\'{c}, B.~Gulja\v{s}, \emph{Extensions of {H}ilbert
  {$C^\ast$}-modules}, Houston Journal of Mathematics \textbf{30} (2004),
  537--558.

\bibitem{Fuf2021faa}
D.~V. Fufaev, \emph{A {H}ilbert {$C^\ast$}-module with extremal properties},
  Functional Analysis and its Applications \textbf{56} (2022), no.~1, 72--80.

\bibitem{Fuf2022Path}
D.~V. Fufaev, \emph{Topological and frame properties of certain pathological
  {C}*-algebras}, Russian Journal of Mathematical Physics \textbf{29} (2022),
  no.~2, 170--182.

\bibitem{FufTro2024UniformStr}
D.~Fufaev,  E.~Troitsky, \emph{A new uniform structure for {H}ilbert
  ${C}^*$-modules}, arXiv 2402.18261, 2024.

\bibitem{Kasp}
G.~G. Kasparov, \emph{{H}ilbert {C*}-modules: theorems of {S}tinespring and
  {V}oiculescu}, J. Operator Theory \textbf{4} (1980), 133--150.

\bibitem{Lance}
E.~C. Lance, \emph{Hilbert {C*}-modules - a toolkit for operator algebraists},
  London Mathematical Society Lecture Note Series, vol. 210, Cambridge
  University Press, England, 1995.

\bibitem{Manuilov2023MathNachr}
V.~Manuilov, \emph{On extendability of functionals on {H}ilbert
  ${C}^*$-modules}, Mathematische Nachrichten (2023), DOI:
  10.1002/mana.202200471.

\bibitem{ManuilovTroit2000JMS}
V.~M. Manuilov, E.~V. Troitsky, \emph{Hilbert ${C}^*$- and ${W}^*$-modules
  and their morphisms}, Journal of Mathematical Sciences \textbf{98} (2000),
  no.~2, 137--201.

\bibitem{MTBook}
V.M. Manuilov, E.V. Troitsky, \emph{Hilbert ${C}^{*}$-modules}, American
  Mathematical Society, Providence, R.I., 2005.

\bibitem{ManTroit2023}
V.~Manuilov,  E.~Troitsky, \emph{Hilbert {$C^*$}-modules with
  {H}ilbert dual and {$C^*$}-{F}redholm operators}, Integral Equations Operator
  Theory \textbf{95} (2023), no.~3, Paper No. 17, 12.  

\bibitem{Ped}
G.~K. Pedersen, \emph{{C*}-algebras and their automorphism groups}, Academic
  Press, London -- New York -- San Francisco, 1979.

\bibitem{Troitsky2020JMAA}
E.~Troitsky, \emph{Geometric essence of ``compact'' operators on {H}ilbert
  {C}*-modules}, Journal of Mathematical Analysis and Applications \textbf{485}
  (2020), no.~2, 123842.

\bibitem{TroitFuf2020}
E.~V. Troitsky, D.~V. Fufaev, \emph{Compact operators and uniform structures
  in hilbert {C}*-modules}, Functional Analysis and its Applications
  \textbf{54} (2020), 287--294.


\end{thebibliography}


\end{document}